\pgfplotsset{compat=1.18}
\definecolor{uuuuuu}{rgb}{0.27,0.27,0.27}
\definecolor{sqsqsq}{rgb}{0.1255,0.1255,0.1255}
\newtheorem{definition}{Definition} [section]
\newtheorem{theorem}[definition]{Theorem}
\newtheorem{proposition}[definition]{Proposition}
\newtheorem{claim}[definition]{Claim}
\begin{document}
\title{\bf\Large Generalized Andr\'{a}sfai--Erd\H{o}s--S\'{o}s theorems for odd cycles}
\date{\today}
\author[1]{Zian Chen\thanks{Email: \texttt{michaelchen24@163.com}}}
\author[1]{Jianfeng Hou\thanks{Research  supported by National Key R$\&$D Program of China (Grant No. 2023YFA1010202), National Natural Science Foundation of China (Grant No. 12071077) and the Central Guidance on Local Science and Technology Development Fund of Fujian Province (Grant No. 2023L3003). Email: \texttt{jfhou@fzu.edu.cn}}}
\author[1]{Caiyun Hu \thanks{Email: \texttt{hucaiyun.fzu@gmail.com}}}  
\author[2]{Xizhi Liu\thanks{Research  supported by ERC Advanced Grant 101020255. Email: \texttt{xizhi.liu.ac@gmail.com}}}
\affil[1]{Center for Discrete Mathematics,
            Fuzhou University, Fujian, 350003, China}
\affil[2]{Mathematics Institute and DIMAP,
            University of Warwick,
            Coventry, CV4 7AL, UK}
\maketitle
\begin{abstract}
    In this note, we establish Andr\'{a}sfai--Erd\H{o}s--S\'{o}s-type stability theorems for two generalized Tur\'{a}n problems involving odd cycles, both of which are extensions of the Erd{\H o}s Pentagon Problem. 
    Our results strengthen previous results by Lidick\'{y}--Murphy~\cite{LM21} and Beke--Janzer~\cite{BJ24}, while also simplifying parts of their proofs. 

\medskip

\textbf{Keywords:} generalized Tur\'{a}n problems, odd cycles, degree-stability. 
\end{abstract}
\section{Introduction}\label{SEC:Intorduction}
In this note, we consider graphs in which each vertex is uniquely labeled. 
We identify a graph with its edge set and denote the vertex set of a graph $G$ as $V(G)$. 
We use $|G|$ and $v(G)$ to denote the number of edges and the number of vertices in a graph $G$, respectively. 
For every integer $r$, we use $K_r$ and $C_r$ to denote the complete graph and cycle on $[r]$, respectively. The edge set of $C_r$ is given by $\left\{\{i,i+1\} \pmod{r} \colon i \in [r] \right\}$. 

Given two graphs $Q$ and $G$, a map $\varphi \colon V(Q) \to V(G)$ is a \textbf{homomorphism} if $\varphi(e) \in G$ for all $e \in Q$.  
If such a homomorphism exists, we say $Q$ is \textbf{$G$-colorable}. 
The collection of all homomorphisms from $Q$ to $G$ is denoted by $\mathrm{Hom}(Q,G)$, and its size is denoted by $\mathrm{hom}(Q,G)$. 
Similarly, the family of all \textbf{injective homomorphisms} from $Q$ to $G$ is denoted by $\mathrm{Inj}(Q,G)$, and its size is denoted by $\mathrm{inj}(Q,G)$.
For every vertex $v\in V(G)$, the  \textbf{$Q$-degree} of $v$ in $G$ is 
\begin{align*}
    d_{Q,G}(v)
    \coloneqq \left\{\varphi \in \mathrm{Inj}(Q,G) \colon v\in \varphi(V(Q))\right\}. 
\end{align*}
We used $\delta_{Q}(G)$ and $d_{Q}(G)$ to denote the \textbf{minimum} and \textbf{average} $Q$-degree of $G$, respectively. 

Given a family $\mathcal{F}$ of graphs, we say $G$ is \textbf{$\mathcal{F}$-free}
if it does not contain any member of $\mathcal{F}$ as a subgraph.
Given a graph $Q$ and a family $\mathcal{F}$ of graphs, let 
\begin{align*}
    \mathrm{inj}(n,Q,\mathcal{F})
    \coloneqq \max\left\{\mathrm{inj}(Q,G) \colon  \text{$v(G) = n$ and $G$ is $\mathcal{F}$-free} \right\}. 
\end{align*}
Generalizing the fundamental Tur\'{a}n problem (see~\cite{TU41}), the well-studied \textbf{generalized Tur\'{a}n number} $\mathrm{ex}(n,Q,\mathcal{F})$ (see e.g.~\cite{Erd62gen,AS16}) is defined in relation to $\mathrm{inj}(n,Q,\mathcal{F})$ through $\mathrm{ex}(n,Q,\mathcal{F}) \coloneqq {\mathrm{inj}(n,Q,\mathcal{F})}/{|\mathrm{Aut}(Q)|}$.
Here, $\mathrm{Aut}(Q) = \mathrm{Inj}(Q,Q)$ denotes the automorphism group of $Q$. 
The \textbf{generalized Tur\'{a}n density} is defined as $\pi(Q, \mathcal{F}) \coloneqq \lim_{n\to \infty} {\mathrm{ex}(n,Q,\mathcal{F})}/{\binom{n}{v(Q)}}$. 
The existence of this limit can be established by a simple averaging argument used by Katona--Nemetz--Simonovits in~\cite{KNS64}. 
%
%
To maintain consistency in notation, we will use $\mathrm{inj}(n,Q,\mathcal{F})$ in place of $\mathrm{ex}(n,Q,\mathcal{F})$ for the remainder of this note. 

In 1984, Erd\H{o}s~\cite{Erd84} conjectured that for every $n \ge 5$ the maximum number of copies of $C_5$ in an $n$-vertex triangle-free graph is attained by the balanced blowup of $C_5$. 
This conjecture was independently resolved by Grzesik~\cite{Gre12} and Hatami--Hladk\'{y}--Kr\'{a}\v{l}--Norine--Razborov~\cite{HHKNR13} for large $n$, and later by Lidick\'{y}--Pfender~\cite{LP18} for all $n$, 
utilizing the powerful Flag Algebra machinery~\cite{Raz07}. 

Two notable generalizations of Erd\H{o}s' Pentagon Problem were established recently. 
Lidick\'{y}--Murphy~\cite{LM21} determined $\mathrm{inj}(n,C_5,K_{r+1})$ for every $r \ge 3$ when $n$ is sufficiently large. 
Grzesik--Kielak~\cite{GK23} determined $\mathrm{inj}(n,C_{2r+1},\{C_3, \ldots, C_{2r-1}\})$ for every $r \ge  3$. 
Later, Beke--Janzer~\cite{BJ24} improved upon this by determining $\mathrm{inj}(n,C_{2r+1},C_{2r-1})$ for $r \ge  3$ when $n$ is large. 

We strengthen the results of Lidick\'{y}--Murphy~\cite{LM21} and Beke--Janzer~\cite{BJ24}, providing simpler proofs that also streamline parts of their original arguments. 
\begin{theorem}\label{THM:C5-Kr}
    For every integer $r \ge 3$ there exist $\delta >0$ and $N_0$ such that the following holds for every $n \ge N_0$. 
    \begin{enumerate}[label=(\roman*)]
        \item\label{THM:C5-Kr-1} If $G$ is an $n$-vertex $K_{r+1}$-free graph with 
        \begin{align}\label{equ:THM:C5-Kr}
            \delta_{C_{5}}(G) 
            \ge (1-\delta) \cdot \frac{5\cdot \mathrm{inj}(n,C_{5}, K_{r+1})}{n}, 
        \end{align}
        then $G$ is $K_{r}$-colorable. 
        \item\label{THM:C5-Kr-2} If $G$ is an $n$-vertex $C_{2r-1}$-free graph with 
        \begin{align}\label{equ:THM:C2r+1-C2r-1}
            \delta_{C_{2r+1}}(G) 
            \ge (1-\delta) \cdot \frac{(2r+1)\cdot \mathrm{inj}(n,C_{2r+1}, C_{2r-1})}{n}, 
        \end{align}
        then $G$ is $C_{2r+1}$-colorable.
    \end{enumerate}
\end{theorem}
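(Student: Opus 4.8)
The plan is to derive both parts from a common two-phase scheme: first convert the minimum-$Q$-degree hypothesis into near-optimality of the global count, and then use the degree condition a second time to upgrade the resulting approximate structure to exact colorability. For the first phase I would double count. Since an injective homomorphism $\varphi$ has exactly $v(Q)$ vertices in its image, it contributes to $d_{Q,G}(v)$ for precisely those $v\in\varphi(V(Q))$, so $\sum_{v}d_{Q,G}(v)=v(Q)\cdot\mathrm{inj}(Q,G)$. Averaging over the $n$ vertices, the hypothesis \eqref{equ:THM:C5-Kr} (resp.\ \eqref{equ:THM:C2r+1-C2r-1}) gives $\mathrm{inj}(Q,G)\ge(1-\delta)\,\mathrm{inj}(n,Q,\mathcal{F})$, so $G$ is within a $(1-\delta)$-factor of extremal for the relevant generalized Tur\'an problem. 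In each case the extremal configuration is (essentially) a blowup of a base graph $H_0$ that is itself $H$-colorable for the target $H$: the Lidick\'y--Murphy construction in \ref{THM:C5-Kr-1}, which is $K_r$-colorable, and $C_{2r+1}$ itself in \ref{THM:C5-Kr-2}. The payoff of this framing is that once I show $G$ is a subgraph of a blowup of $H_0$, collapsing the blown-up parts gives $G\to H_0\to H$, hence $H$-colorability.

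For \ref{THM:C5-Kr-1} I would feed the near-extremality into the stability version of the Lidick\'y--Murphy theorem to obtain a partition $V(G)=U_1\cup\dots\cup U_r$ agreeing with the extremal $r$-partite blowup up to at most $\epsilon n^2$ edges, where $\epsilon\to0$ as $\delta\to0$. The heart is then a cleaning step: using the minimum $C_5$-degree, I would show that no vertex carries an edge violating this partition. Any such edge, combined with the dense and typical ``core'' of each part, would either complete a forbidden $K_{r+1}$ or leave $v$ on strictly too few pentagons to satisfy \eqref{equ:THM:C5-Kr}. Once every edge respects the partition, $G$ is a subgraph of a blowup of $H_0$ and the reduction of the previous paragraph finishes the argument.

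For \ref{THM:C5-Kr-2} I would take a more self-contained route. First I would argue that the minimum $C_{2r+1}$-degree forces an ordinary minimum degree comparable to $\tfrac{2n}{2r+1}$, the value attained in the blowup of $C_{2r+1}$. Next I would show that this degree together with $C_{2r-1}$-freeness forces odd girth exactly $2r+1$: given any odd cycle $C_{2k+1}$ with $k<r-1$, replacing one of its edges by an internally disjoint path of length $2(r-k)-1$ yields a $C_{2r-1}$, and high minimum degree supplies such a path by a routine expansion argument, contradicting the hypothesis. With odd girth exactly $2r+1$ in hand, I would fix a shortest (hence $(2r+1)$-)cycle as a reference and label each vertex by its alignment with this cycle; the absence of short odd cycles guarantees the labels in $\mathbb{Z}_{2r+1}$ are consistent and the high degree guarantees every vertex is reached, yielding the desired homomorphism to $C_{2r+1}$, in the spirit of the homomorphism-threshold arguments for odd cycles.

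I expect the main obstacle to be the cleaning and degree-transfer steps rather than the final colorings. In \ref{THM:C5-Kr-1} the difficulty is quantitative: one must show that the $C_5$-degree deficit incurred by even a \emph{single} pattern-violating edge at an otherwise pentagon-rich vertex strictly exceeds the slack $\delta$ allowed by \eqref{equ:THM:C5-Kr}, which requires comparing the pentagon count through a ``good'' versus a ``bad'' vertex precisely enough and controlling the $\epsilon n^2$ exceptional edges so that they cannot conspire to restore the deficit. In \ref{THM:C5-Kr-2} the analogous crux is the sharp interface between the degree condition and the odd girth, namely extracting the $\tfrac{2n}{2r+1}$-type lower bound on ordinary degree from the $C_{2r+1}$-count and making the path-lengthening robust enough to run from every short odd cycle simultaneously.
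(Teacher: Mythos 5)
Your opening double count ($\sum_{v} d_{Q,G}(v) = v(Q)\cdot \mathrm{inj}(Q,G)$, hence near-extremality of the global count) is correct, and it is also the first move inside the machinery the paper relies on. For part (i), though, your route diverges from the paper's: you propose edge-stability followed by a direct cleaning of every vertex against an approximate partition, which is essentially the original Lidick\'y--Murphy argument \cite{LM21} --- exactly the argument this paper is streamlining. The paper instead invokes Theorem~\ref{THM:general-generalized-Turan-a} from \cite{CL24} (edge-stability plus \emph{vertex-extendability} implies degree-stability), so it only ever analyzes a single vertex $v_{\ast}$ attached to an exactly $K_r$-colorable graph (Proposition~\ref{PROP:vtx-ext-C5-Kr}). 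If you run your cleaning route, the two points you gloss over are where all the work sits. First, the partition produced by edge-stability is not canonical: a small fraction of vertices may simply be misassigned, so ``no vertex carries a violating edge'' is false for that partition, and you must first reassign vertices (say, each to the part where it has fewest neighbors). Second, an intra-part edge never \emph{decreases} the number of pentagons through its endpoints, so the deficit cannot be charged to the bad edge itself; the correct dichotomy is that robust attachment to all $r$ parts yields $K_{r+1}$ via common neighborhoods of typical vertices, hence every vertex attaches strongly to exactly $r-1$ parts (strong attachment to fewer parts is what causes the $C_5$-degree deficit), and then an intra-part edge between two vertices assigned to the same part again yields $K_{r+1}$. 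This is recoverable from your sketch, so I would call part (i) a correct outline by a heavier, non-modular route.

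Part (ii) has genuine gaps. (a) The transfer from minimum $C_{2r+1}$-degree to ordinary minimum degree $(2/(2r+1)-\epsilon)n$ is asserted, not proved: the naive bound --- the number of $(2r+1)$-cycles through $v$ is at most the number of pairs of neighbors of $v$ times the maximum number of paths of length $2r-1$ joining such a pair, i.e.\ $O(d(v)^2 n^{2r-2})$ --- gives $d(v)=\Omega(n)$ but with the wrong constant, and the sharp constant needs a sharp count of paths between two fixed vertices in a $C_{2r-1}$-free graph, which you do not supply. (b) More seriously, ``high minimum degree supplies such a path by a routine expansion argument'' fails as stated: minimum degree $\approx 2n/(2r+1)$ provides no generic expansion connecting two \emph{prescribed} adjacent vertices by an internally disjoint path of \emph{prescribed odd} length. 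In the extremal blowup of $C_{2r+1}$, adjacent vertices have no common neighbor at all, and $t$-step neighborhoods grow by only about $n/(2r+1)$ per step, so nothing like expansion is available at this density; any correct argument must use $C_{2r-1}$-freeness structurally (for a triangle $xyz$: either the required path of length $2r-3$ exists, giving the contradiction, or its absence forces the neighborhoods of $x,y,z$ to be almost mutually non-adjacent, which must then be played off against the degree condition). This interplay between short odd cycles and the degree hypothesis is precisely the hard part of Beke--Janzer \cite{BJ24}, and your sketch replaces it with one sentence. (c) Even granting odd girth exactly $2r+1$, the assertion that ``the labels in $\mathbb{Z}_{2r+1}$ are consistent'' is the conclusion of a nontrivial H\"aggkvist-type theorem (odd girth at least $2r+1$ plus minimum degree above a threshold implies a homomorphism to $C_{2r+1}$), which you would need to prove or cite with its exact threshold and then check that $2/(2r+1)$ clears it. By contrast, the paper's proof (Proposition~\ref{PROP:vtx-ext-C2r+1-C2r-1}) needs none of this: since $G-v_{\ast}$ is exactly $C_{2r+1}$-colorable, it suffices to show (Claim~\ref{CLAIM:proof-C2r+1-v-ast-neighbor-a}) that $N(v_{\ast})$ concentrates on exactly two parts at distance $2$, by exhibiting a copy of $C_{2r-1}$ probabilistically whenever this fails, and then $v_{\ast}$ is mapped to the vertex of $C_{2r+1}$ between those two parts. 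This exactness of the structure away from $v_{\ast}$ is what makes the vertex-extendability formulation tractable.
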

%
%
\textbf{Remarks}.
Theorem~\ref{THM:C5-Kr} implies that for large $n$, the extremal constructions for $\mathrm{inj}(n,C_{5}, K_{r+1})$ and $\mathrm{inj}(n,C_{2r+1}, C_{2r-1})$ are $K_{r}$-colorable and $C_{2r+1}$-colorable, respectively. 
From this, the exact values of $\mathrm{inj}(n,C_{5}, K_{r+1})$ and $\mathrm{inj}(n,C_{2r+1}, C_{2r-1})$ are straightforward to determine (see~{\cite[Proposition~4.11]{CL24}}). 
Hence, this provides alternative (and shorter) proofs for~{\cite[Theorem~1.4]{LM21}} and~{\cite[Theorem~1.1]{BJ24}}. 

The proof of Theorem~\ref{THM:C5-Kr}~\ref{THM:C5-Kr-1} is presented in Section~\ref{SEC:proof-C5-Kr}. 
The proof of Theorem~\ref{THM:C5-Kr}~\ref{THM:C5-Kr-2} is presented in Section~\ref{SEC:proof-C2r+1-C2r-1}.
In the next section, we present some definitions and preliminary results. 

\section{Preliminaries}\label{SEC:prelim}
Given a graph $G$ and a vertex $v\in V(G)$, the \textbf{neighborhood} of $v$ in $G$ is $N_{G}(v) \coloneqq \{u \in V(G) \colon uv\in G\}$. 
The degree of $v$ in $G$ is $d_{G}(v) \coloneqq |N_{G}(v)|$. 
Given integers $n \ge r\ge 2$, the Tur\'{a}n graph $T(n,r)$ is the balanced complete $r$-partite on $[n]$. 

The following definitions are motivated by the seminal works of Simonovits~\cite{SI68} and Andr\'{a}sfai--Erd\H{o}s--S\'{o}s~\cite{AES74}, as well as recent work~\cite{LMR23unif}. 
Let $Q$ be a graph and $\mathcal{F}$ be a family of graphs. 
Let $\mathfrak{H}$ be a hereditary\footnote{Here, hereditary means that if $H\in \mathfrak{H}$, then every subgraph of $H$ is also contained in $\mathfrak{H}$.} family of $\mathcal{F}$-free graphs. 
\begin{enumerate}[label=(\roman*)]
    \item We say $\mathcal{F}$ is \textbf{$Q$-edge-stable} with respect to $\mathfrak{H}$ if for every $\varepsilon>0$ there exist $\delta>0$ and $N_0$ such that every $\mathcal{F}$-free graph $G$ on $n \ge N_0$ vertices with $\mathrm{inj}(Q, \mathcal{H}) \ge (1-\delta) \cdot \mathrm{inj}(n,Q,\mathcal{F})$ is contained in $\mathfrak{H}$ after removing at most $\varepsilon n^r$ edges. 
    \item We say $\mathcal{F}$ is \textbf{$Q$-degree-stable} with respect to $\mathfrak{H}$ if there exist $\delta>0$ and $N_0$ such that every $\mathcal{F}$-free graph $G$ on $n \ge N_0$ vertices with $d_{Q}(\mathcal{H}) \ge (1-\delta) \cdot \frac{v(Q)\cdot \mathrm{inj}(n,Q,\mathcal{F})}{n}$ is contained in $\mathfrak{H}$. 
    \item We say $\mathcal{F}$ is \textbf{$Q$-vertex-extendable} with respect to $\mathfrak{H}$ if there exist $\delta>0$ and $N_0$ such that the following holds for every $\mathcal{F}$-free graph $G$ on $n \ge N_0$ vertices with $d_{Q}(\mathcal{H}) \ge (1-\delta) \cdot \frac{v(Q)\cdot \mathrm{inj}(n,Q,\mathcal{F})}{n} \colon$ 
    if $\mathcal{H}-v \in \mathfrak{H}$ for some $v \in V(\mathcal{H})$, then $\mathcal{H} \in \mathfrak{H}$. 
\end{enumerate}

The following result is a special case of~{\cite[Theorem~4.10]{CL24}}. 
\begin{theorem}[\cite{CL24}]\label{THM:general-generalized-Turan-a}
    Let $Q$ be a graph and $\mathcal{F}$ be a family of graphs satisfying $\pi(Q, \mathcal{F}) > 0$.  
    Let $\mathfrak{H}$ is a hereditary family of $\mathcal{F}$-free graphs. 
    Suppose that $\mathcal{F}$ is $Q$-edge-stable and $Q$-vertex-extendable with respect to $\mathfrak{H}$. 
    Then $\mathcal{F}$ is $Q$-degree-stable with respect to $\mathfrak{H}$. 
\end{theorem}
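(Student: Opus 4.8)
The plan is to derive $Q$-degree-stability from the two hypotheses by an induction on $n$, using $Q$-vertex-extendability as the inductive engine and $Q$-edge-stability (via a counting identity) to certify that its hypotheses are met. Write $\Delta_n \coloneqq \frac{v(Q)\cdot \mathrm{inj}(n,Q,\mathcal F)}{n}$. Let $(\delta_1,N_1)$ be the constants from $Q$-vertex-extendability, fix a small $\varepsilon>0$ to be chosen last, let $(\delta_2,N_2)$ be the constants $Q$-edge-stability returns for this $\varepsilon$, and set $\delta \coloneqq \min\{\delta_1,\delta_2\}$ with $N_0$ large.

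First I would record the averaging identity $\sum_{v\in V(G)} d_{Q,G}(v) = v(Q)\cdot \mathrm{inj}(Q,G)$, which holds because every $\varphi\in\mathrm{Inj}(Q,G)$ has an image of exactly $v(Q)$ vertices. Hence the hypothesis $\delta_Q(G)\ge (1-\delta)\Delta_n$ forces $\mathrm{inj}(Q,G)\ge (1-\delta)\,\mathrm{inj}(n,Q,\mathcal F)$, while $\mathcal F$-freeness gives $\mathrm{inj}(Q,G)\le \mathrm{inj}(n,Q,\mathcal F)$. Thus $G$ is near-extremal, and since its average $Q$-degree then lies in $[(1-\delta)\Delta_n,\Delta_n]$ while every $Q$-degree is at least $(1-\delta)\Delta_n$, it is also nearly $Q$-regular. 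In particular $G$ meets the hypothesis of $Q$-edge-stability, so deleting at most $\varepsilon n^2$ edges turns $G$ into some $G'\in\mathfrak H$; by heredity every induced subgraph of $G'$ also lies in $\mathfrak H$, which records that $G$ is close to $\mathfrak H$.

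The induction then proceeds as follows. Given $G$ on $n\ge N_0$ vertices with $\delta_Q(G)\ge(1-\delta)\Delta_n$, it suffices by $Q$-vertex-extendability---whose degree hypothesis holds for $G$ since $\delta\le\delta_1$---to produce one vertex $v$ with $G-v\in\mathfrak H$. I would obtain this from the induction hypothesis applied to $G-v$, provided $v$ can be chosen so that $G-v$ still meets the size-$(n-1)$ threshold $\delta_Q(G-v)\ge (1-\delta)\Delta_{n-1}$. The bookkeeping is exact: deleting $v$ decreases $d_{Q,G}(u)$ by $p(u,v)$, the number of injective copies of $Q$ whose image contains both $u$ and $v$, whereas the admissible threshold drops by $(1-\delta)(\Delta_n-\Delta_{n-1})$.

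The hard part will be exactly this choice of $v$. Because $\pi(Q,\mathcal F)>0$ makes $\mathrm{inj}(n,Q,\mathcal F)=\Theta(n^{v(Q)})$, both the threshold drop $\Delta_n-\Delta_{n-1}$ and a typical codegree $p(u,v)$ are of the same order $\Theta(n^{v(Q)-2})$, so the comparison is tight and no vertex may be discarded carelessly. I would take $v$ of minimum $Q$-degree: then $\mathrm{inj}(Q,G-v)=\mathrm{inj}(Q,G)-d_{Q,G}(v)$ loses at most the average $Q$-degree, keeping the count near-extremal so that $G-v$ stays in the edge-stability regime, and I would use the closeness of $G$ to $\mathfrak H$ together with the near-$Q$-regularity to bound $\max_u p(u,v)$ below the threshold drop for this particular $v$; positivity of $\pi(Q,\mathcal F)$ is what leaves room in the controlling inequality once the structure of $G'$ is exploited. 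A final delicate point is the base of the induction, valid only for $n\ge N_0$: one must check that at the bottom scale the minimum-degree condition, combined with the structural information already extracted, admits no graph outside $\mathfrak H$, which is where the constants $\varepsilon,\delta,N_0$ have to be calibrated against one another.
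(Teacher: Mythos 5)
The paper does not prove this theorem itself: it is quoted from~{\cite[Theorem~4.10]{CL24}}, whose argument follows the single-scale framework of~\cite{LMR23unif}. Your preliminaries agree with that argument (the identity $\sum_{v}d_{Q,G}(v)=v(Q)\,\mathrm{inj}(Q,G)$, hence near-extremality, hence applicability of edge-stability), but the architecture you build on top of them --- an induction on $n$ that deletes one vertex per step while keeping the \emph{same} relative threshold $(1-\delta)\Delta_m$ at every scale $m$ --- cannot be made to work, and the obstruction is quantitative, not a matter of care. Your step must in particular handle graphs whose minimum $Q$-degree is exactly $(1-\delta)\Delta_n$. For a vertex $u$ at the threshold, the admissible loss is $(1-\delta)(\Delta_n-\Delta_{n-1})$, and the Katona--Nemetz--Simonovits averaging bound $\mathrm{inj}(n-1,Q,\mathcal F)\ge\tfrac{n-v(Q)}{n}\,\mathrm{inj}(n,Q,\mathcal F)$ forces
\begin{align*}
(1-\delta)\left(\Delta_n-\Delta_{n-1}\right)\;\le\;(1-\delta)\,\frac{(v(Q)-1)\Delta_n}{n-1}\;=\;\frac{1}{n-1}\sum_{v\ne u}p(u,v),
\end{align*}
i.e.\ the budget at $u$ is at most the \emph{average} codegree of $u$. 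So your deleted vertex must have below-average codegree with every threshold vertex simultaneously; in a graph that is $Q$-regular at the threshold (your own near-regularity remark makes this the main case) such a vertex need not exist --- for instance it fails whenever the vertices can be paired up so that paired vertices have above-average codegree, and nothing available excludes this. The only repair is to let the threshold drift, $\delta_{m-1}\ge\delta_m+c/m$, but then $\delta_{N_0}\ge\delta_n+c\log(n/N_0)$, while every intermediate invocation of vertex-extendability requires $\delta_m$ to stay below its fixed constant $\delta_1$; this is impossible once $n/N_0$ is large, which is exactly the regime of the theorem. Finally, your base case is circular: ``degree-stability at the single scale $N_0$'' is an instance of the statement being proved, and neither edge-stability nor vertex-extendability certifies it within your scheme.

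The cited proof avoids induction on $n$ altogether, and this is the idea your proposal is missing: all applications of vertex-extendability occur at essentially the original scale, with every degree estimate anchored to $G$ itself, so the error stays $O(\varepsilon+\eta+\mu)$ no matter how many times the black box is invoked. Concretely: edge-stability gives $B\subseteq G$ with $|B|\le\varepsilon n^2$ and $G-B\in\mathfrak H$; let $Z$ be the vertices incident to at least $\eta n$ edges of $B$, so $|Z|\le 2(\varepsilon/\eta)n=:\mu n$, and put $W\coloneqq V(G)\setminus Z$. By heredity $G[W]-B[W]\in\mathfrak H$. One then reinstates the missing edges one \emph{star} at a time: enumerating the vertices $x_1,x_2,\dots$ of $W$ that meet $B[W]$, let $H_k$ be $G[W]-B[W]$ together with all edges of $B[W]$ at $x_1,\dots,x_k$. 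Each $H_k$ is $\mathcal F$-free (it is a subgraph of $G$), satisfies $H_k-x_k=H_{k-1}-x_k\in\mathfrak H$ by heredity, and, because every vertex of $W$ lies in fewer than $\eta n$ edges of $B$ and $\pi(Q,\mathcal F)>0$, its minimum $Q$-degree is within $O\bigl((\varepsilon+\eta+\mu)n^{v(Q)-1}\bigr)$ of that of $G$, hence above $(1-\delta_1)\Delta_{|W|}$. Vertex-extendability, applied on the \emph{fixed} vertex set $W$, then gives $H_k\in\mathfrak H$ for every $k$, and the last $H_k$ is $G[W]$. Finally the at most $\mu n$ vertices of $Z$ are added back one at a time by the standard use of vertex-extendability, again comparing all degrees directly with those in $G$. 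Your averaging identity and your use of $\pi(Q,\mathcal F)>0$ to keep all thresholds of order $n^{v(Q)-1}$ survive verbatim in this scheme; the induction on $n$ should be discarded.
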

For every $r$, let $\mathfrak{K}_{r}$ denote the collection of all $K_{r}$-colorable graphs, and let $\mathfrak{C}_{r}$ denote the collection of all $C_{r}$-colorable graphs. 

The edge-stability of $\mathrm{ex}(n,C_5,K_{r+1})$ was established in~\cite{LM21}. 
\begin{theorem}[{\cite[Lemma~3.6]{LM21}}]\label{THM:stability-C5-Kr}
    For every $r\ge 3$, the graph $K_{r+1}$ is $C_5$-edge-stable with respect to the family $\mathfrak{K}_{r}$.  
\end{theorem}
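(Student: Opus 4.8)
The plan is to deduce the edge-stability from a structural description of the asymptotically extremal configurations, phrased in the language of graph limits. Since $\pi(C_5,K_{r+1})>0$, set $\lambda \coloneqq \lim_{n\to\infty}\mathrm{inj}(n,C_5,K_{r+1})/n^5$, which is positive and equals the maximum of the homomorphism density $t(C_5,W)$ over all $K_{r+1}$-free graphons $W$. Here $K_{r+1}$-freeness passes to the limit: for any graph $G$ every homomorphism $K_{r+1}\to G$ is injective (two identified vertices would force a loop), so $K_{r+1}$-freeness gives $t(K_{r+1},G)=0$, a condition inherited by limits, whence $t(K_{r+1},W)=0$; and $\mathrm{inj}(C_5,G)$ and $t(C_5,G)\,n^5$ agree up to a lower-order term. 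Let $\mathcal{M}$ be the set of $K_{r+1}$-free graphons $W$ with $t(C_5,W)=\lambda$.

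The core of the argument is the claim that every $W\in\mathcal{M}$ is $K_r$-colorable, meaning that $[0,1]$ admits a partition into $r$ parts on whose diagonal blocks $W$ vanishes almost everywhere. I would establish this from the flag-algebra certificate of~\cite{LM21} that proves $t(C_5,W)\le\lambda$ for $K_{r+1}$-free $W$: at the optimum every sum-of-squares slack term must vanish, and these tightness (complementary-slackness) identities pin down the local statistics of $W$ and force it to be the blowup of the extremal $r$-partite pattern. An alternative, certificate-free route is a symmetrization argument on $W$ itself --- averaging pairs of vertex types whenever doing so respects $K_{r+1}$-freeness and does not decrease $t(C_5,\cdot)$ --- and showing the procedure converges to an $r$-partite graphon. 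Either way one obtains $\mathcal{M}\subseteq\mathfrak{K}_r$ in the graphon sense.

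Granting the claim, the quantitative statement follows by compactness. Suppose edge-stability fails for some $\varepsilon>0$; then there are $K_{r+1}$-free graphs $G_k$ on $n_k\to\infty$ vertices with $\mathrm{inj}(C_5,G_k)\ge(1-1/k)\,\mathrm{inj}(n_k,C_5,K_{r+1})$, each requiring the deletion of more than $\varepsilon n_k^2$ edges to become $K_r$-colorable. Passing to a cut-distance-convergent subsequence $G_k\to W$, we get $t(K_{r+1},W)=0$ and $t(C_5,W)=\lambda$, so $W\in\mathcal{M}$ and hence $W$ is $r$-partite with parts $U_1,\dots,U_r$. Pulling this partition back along the approximating measure-preserving maps yields, for large $k$, a partition of $V(G_k)$ into $r$ parts whose internal edges number at most $d_{\square}(G_k,W)\cdot n_k^2=o(n_k^2)<\varepsilon n_k^2$; deleting these edges makes $G_k$ $K_r$-colorable, contradicting the choice of $G_k$. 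Unwinding the contradiction produces the required $\delta>0$ and $N_0$ for each target $\varepsilon$.

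The main obstacle is the structural claim of the second paragraph: ruling out near-extremal but genuinely non-$r$-partite limits. Everything else --- the passage to graphon limits, the counting estimates relating $\mathrm{inj}(C_5,\cdot)$ to $t(C_5,\cdot)$, and the elementary conversion of cut-closeness to an $r$-partite graphon into $O(\varepsilon n^2)$ edge deletions --- is routine. I expect the real work to lie in verifying that the flag-algebra optimum for $t(C_5,\cdot)$ over $K_{r+1}$-free graphons is sharp, with tightness set consisting exactly of the $K_r$-colorable blowups of the extremal pattern.
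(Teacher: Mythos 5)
First, a point of reference: the paper does not prove this statement at all --- it is imported verbatim as Lemma~3.6 of Lidick\'y--Murphy~\cite{LM21} --- so the benchmark is that lemma's proof, not anything in this note. Within your proposal, the third paragraph (the compactness reduction) is essentially correct and standard: $K_{r+1}$-freeness passes to cut-distance limits because homomorphisms from complete graphs into simple graphs are injective, $t(C_5,\cdot)$ is continuous in cut distance, and cut-closeness to a $K_r$-colorable graphon converts into $o(n^2)$ internal edges after the usual rounding of vertices whose intervals straddle parts (your bound ``at most $d_{\square}(G_k,W)\cdot n_k^2$'' elides a factor of order $r^3$ from this rounding, but that is harmless). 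This reduction from ``every maximizing graphon lies in $\mathfrak{K}_r$'' to edge-stability is the easy half of the argument.

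The genuine gap is that the second paragraph --- the claim that every $K_{r+1}$-free graphon $W$ with $t(C_5,W)=\lambda$ is $K_r$-colorable --- is the entire content of the theorem, and you do not prove it. Neither of your two suggested routes constitutes an argument. Deferring to ``the flag-algebra certificate of~\cite{LM21}'' is circular: the statement you are asked to prove \emph{is} a lemma of~\cite{LM21}, and identifying a certificate, exhibiting its sum-of-squares slack terms, and showing that their vanishing forces an $r$-partite structure is precisely the substance of the work there; your write-up presupposes that analysis rather than supplying any part of it. The alternative symmetrization route is unsubstantiated in a stronger sense: no elementary averaging argument of this kind is known even for the original Erd\H{o}s Pentagon Problem (the triangle-free case), which resisted exactly such approaches for decades before the flag-algebra solutions of Grzesik and Hatami--Hladk\'y--Kr\'a\v{l}--Norine--Razborov; there is no reason offered why local moves that preserve $K_{r+1}$-freeness and do not decrease $t(C_5,\cdot)$ should converge, nor why their limit should be $r$-partite. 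So the proposal correctly isolates where the difficulty lies, but as written it establishes only the known implication ``structured maximizers imply edge-stability'' and leaves the theorem itself unproven.
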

The edge-stability of $\mathrm{ex}(n,C_{2r+1},C_{2r-1})$ was established in~\cite{BJ24}, though it can also be deduced from the proof in~\cite{GK23}. 
\begin{theorem}[\cite{GK23}~{\cite[Lemma~2.10]{BJ24}}]\label{THM:stability-C2r+1-C2r-1}
    For every $r\ge 3$, the graph $C_{2r-1}$ is $C_{2r+1}$-edge-stable with respect to the family $\mathfrak{C}_{2r+1}$.  
\end{theorem}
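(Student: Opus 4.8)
The plan is to combine Szemerédi's regularity method with the stability of the associated weighted extremal problem. Throughout write $Q=C_{2r+1}$, and recall that the balanced blowup of $C_{2r+1}$ witnesses $\mathrm{inj}(n,C_{2r+1},C_{2r-1})=(1+o(1))\,\lambda\,n^{2r+1}$, where $\lambda=2(2r+1)^{-2r}$; this value is exactly the one determined by Grzesik--Kielak and Beke--Janzer, so I may take it as known. Fixing $\varepsilon>0$, the goal is to produce $\delta>0$ and $N_0$ so that every $C_{2r-1}$-free graph $G$ on $n\ge N_0$ vertices with $\mathrm{inj}(C_{2r+1},G)\ge(1-\delta)\,\mathrm{inj}(n,C_{2r+1},C_{2r-1})$ can be made $C_{2r+1}$-colorable after deleting at most $\varepsilon n^{2}$ edges.

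First I would apply the regularity lemma to $G$ and pass to the weighted reduced graph $R$ on $[k]$, where an edge $ij$ carries the density $d_{ij}$ of the pair $(V_i,V_j)$ whenever this pair is regular and dense, all remaining pairs being discarded. Deleting the edges inside parts, inside irregular pairs, and inside sparse pairs costs only $o(n^{2})$ edges and, by the counting lemma, changes $\mathrm{inj}(C_{2r+1},G)$ by only $o(n^{2r+1})$; consequently $\mathrm{inj}(C_{2r+1},G)=(1+o(1))\,n^{2r+1}\,t(C_{2r+1},R)$, where $t(C_{2r+1},R)$ is the homomorphism density of $C_{2r+1}$ in the weighted graph $R$. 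The hypothesis then forces $t(C_{2r+1},R)\ge\lambda-o(1)$. Moreover, since $G$ is $C_{2r-1}$-free, the graph removal lemma forces the dense part of $R$ to contain no copy of $C_{2r-1}$: a $C_{2r-1}$ on dense regular pairs would, by the counting lemma, produce $\Omega(n^{2r-1})$ copies of $C_{2r-1}$ in $G$, a contradiction. Hence $R$ is an (essentially) $C_{2r-1}$-free weighted graph whose $C_{2r+1}$-density is within $o(1)$ of the maximum $\lambda$.

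The crux is the corresponding stability statement for the weighted problem: among all $C_{2r-1}$-free weighted graphs, the density $t(C_{2r+1},\cdot)$ is maximized, uniquely up to the choice of weights, by the balanced blowup of $C_{2r+1}$, and this maximizer is robust, i.e. any $C_{2r-1}$-free weighted graph with $t(C_{2r+1},\cdot)\ge\lambda-o(1)$ lies within $o(1)$ in cut distance of the balanced blowup of $C_{2r+1}$. I would approach the maximization through the spectral identity $t(C_{2r+1},R)=\sum_i\mu_i^{2r+1}$, where the $\mu_i$ are the eigenvalues of the (weight-normalized) adjacency operator of $R$, using that $C_{2r-1}$-freeness controls the lower odd closed-walk profile; the uniqueness and robustness then follow from the rigidity of the extremal eigenvalue configuration, backed if necessary by a compactness argument in the space of graphons to exclude stray near-extremal limits. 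Granting this, I would transfer the structure back: the cut-distance bound lets me group the classes of $G$ into $2r+1$ blocks whose mutual densities follow the adjacency pattern of $C_{2r+1}$, with only sparse density inside blocks and across non-adjacent blocks. Deleting these few offending edges (at most $\varepsilon n^{2}$ in total) yields a graph admitting the homomorphism sending each block to its corresponding vertex of $C_{2r+1}$, so the resulting graph lies in $\mathfrak{C}_{2r+1}$.

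The main obstacle is the robustness half of the weighted optimization. Establishing that the blowup of $C_{2r+1}$ is the unique optimum is comparatively soft, but upgrading "density within $o(1)$ of $\lambda$'' into "cut-close to the blowup'' requires genuine rigidity of the extremizer; it is precisely this step that converts an approximate count into approximate structure and hence into the bounded edge deletion demanded by edge-stability. This is where I would concentrate the analysis, leaning on the eigenvalue description together with the fact that $C_{2r+1}$ is a core, so that its only self-homomorphisms are automorphisms and the extremal weighted configuration has no nontrivial competing symmetry.
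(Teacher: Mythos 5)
First, a point of reference: the paper does not prove Theorem~\ref{THM:stability-C2r+1-C2r-1} at all --- it is imported as a black box from \cite{GK23} and \cite[Lemma~2.10]{BJ24}, and the paper's new contribution to part~(ii) is only the vertex-extendability statement (Proposition~\ref{PROP:vtx-ext-C2r+1-C2r-1}), combined with Theorem~\ref{THM:general-generalized-Turan-a}. So your proposal must stand on its own as a proof of the edge-stability result, and as such it has a genuine gap. The scaffolding is fine and standard: regularity plus counting shows the weighted reduced graph $R$ is essentially $C_{2r-1}$-free with $t(C_{2r+1},R)\ge\lambda-o(1)$, where $\lambda=2(2r+1)^{-2r}$ is correctly identified, and a cut-distance bound to the balanced blowup would indeed yield the required deletion of at most $\varepsilon n^2$ edges. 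But the step you yourself call the crux --- that every $C_{2r-1}$-free weighted graph (or graphon with $t(C_{2r-1},\cdot)=0$) whose $C_{2r+1}$-density is within $o(1)$ of $\lambda$ must be cut-close to the balanced blowup --- is exactly the theorem restated in limit language, and it is never proved. Nor can it be extracted from the cited results: knowing the extremal value, or even the uniqueness of extremal graphs for every finite $n$, does not give uniqueness of graphon maximizers, because a graphon maximizer is a limit of merely \emph{near}-extremal $C_{2r-1}$-free graphs (sample from it and apply the removal lemma); deducing graphon uniqueness this way would already require the stability you are trying to prove. Your compactness reduction (uniqueness $\Rightarrow$ stability) is sound, but it shifts the entire burden onto the uniqueness claim, which you describe as ``comparatively soft''; it is not --- it is the whole theorem.

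Second, the concrete route you sketch for that claim is provably insufficient. (i) Your premise that ``$C_{2r-1}$-freeness controls the lower odd closed-walk profile'' is false at the graph level: only $C_{2r-1}$ is forbidden, so $G$ and $R$ may contain triangles and other short odd cycles, whence $\sum_i\mu_i^k=t(C_k,R)>0$ for smaller odd $k$. This is precisely the difficulty separating \cite{BJ24} from \cite{GK23}, where all of $C_3,\dots,C_{2r-1}$ are forbidden. (In the graphon limit one can recover $t(C_k,W)=0$ for all odd $k\le 2r-1$ from $t(C_{2r-1},W)=0$ by a wrapping argument --- traverse a positive-density short odd cycle once and pad with back-and-forth steps --- but this needs to be stated and proved, and it is absent from your write-up.) (ii) More fatally, even granting that all lower odd moments vanish, eigenvalue data cannot identify the blowup, because the binding constraint $0\le W\le 1$ is not a spectral quantity. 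Concretely, the relaxation ``maximize $\sum_i\mu_i^{2r+1}$ subject to $\sum_i\mu_i^k=0$ for odd $3\le k\le 2r-1$ and $\sum_i\mu_i^2\le 1$'' has optimum strictly larger than $\lambda$: multiplying the blowup graphon by the constant $t=\sqrt{(2r+1)/2}>1$ preserves every vanishing-moment constraint and the $L^2$ budget, while multiplying the $(2r+1)$-st moment by $t^{2r+1}$. So the blowup is not even the extremal eigenvalue configuration of your relaxation, and ``rigidity of the extremal eigenvalue configuration'' is a non-starter as stated. Any correct proof must exploit the $\{0,1\}$-valued, combinatorial structure of the object --- which is exactly what the actual arguments of \cite{GK23} and \cite{BJ24} do, and what your proposal leaves unproven.
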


\section{Proof of Theorem~\ref{THM:C5-Kr}}\label{SEC:proof-C5-Kr}
\subsection{Proof of Theorem~\ref{THM:C5-Kr}~\ref{THM:C5-Kr-1}}\label{SEC:proof-C5-Kr}
We prove Theorem~\ref{THM:C5-Kr}~\ref{THM:C5-Kr-1} in this subsection. 
Note that by Theorems~\ref{THM:general-generalized-Turan-a} and~\ref{THM:stability-C5-Kr}, it suffices to establish the following result. 
\begin{proposition}\label{PROP:vtx-ext-C5-Kr}
    For every $r\ge 3$, the graph $K_{r+1}$ is $C_5$-vertex-extendable with respect to the family $\mathfrak{K}_{r}$.  
\end{proposition}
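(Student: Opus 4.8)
The plan is to prove the statement directly. Fixing a suitable $\delta>0$ and large $N_0$, I take a $K_{r+1}$-free graph $G$ on $n\ge N_0$ vertices with $\delta_{C_5}(G)\ge (1-\delta)\cdot \frac{5\,\mathrm{inj}(n,C_5,K_{r+1})}{n}$ such that $G-u\in\mathfrak{K}_r$ for some vertex $u$, and I aim to conclude $G\in\mathfrak{K}_r$. Throughout I use that for $r\ge 3$ the balanced complete $r$-partite graph $T(n,r)$ is, asymptotically, the unique maximiser of the number of copies of $C_5$ among $K_{r+1}$-free graphs (this is exactly what underlies the exact evaluation in the Remark following Theorem~\ref{THM:C5-Kr}), so that $\mathrm{inj}(n,C_5,K_{r+1})=(1+o(1))\,\mathrm{inj}(C_5,T(n,r))$ and the degree threshold above is essentially the common $C_5$-degree of a vertex of $T(n,r)$.

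First I would record the global consequences of the degree hypothesis. Summing $\delta_{C_5}(G)$ over $V(G)$ and using $\sum_{v}d_{C_5,G}(v)=5\,\mathrm{inj}(C_5,G)$ gives $\mathrm{inj}(C_5,G)\ge (1-\delta)\,\mathrm{inj}(n,C_5,K_{r+1})$. Since any single vertex lies in only $O(n^4)=o(\mathrm{inj}(C_5,G))$ copies of $C_5$, the graph $H:=G-u$ is still $K_{r+1}$-free, still lies in $\mathfrak{K}_r$, and contains $(1-\delta-o(1))\,\mathrm{inj}(n,C_5,K_{r+1})$ copies of $C_5$. Thus $H$ is a near-extremal member of $\mathfrak{K}_r$. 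A stability analysis of the $C_5$-count over $r$-partite graphs now applies: adding cross edges only creates copies of $C_5$, so among $r$-partite graphs with given class sizes the count is maximised by the complete pattern, while among complete $r$-partite graphs it is strictly maximised by the balanced one. Hence near-maximality forces the colour classes $V_1,\dots,V_r$ of $H$ to have size $(1/r+o(1))n$ each, and forces every vertex of $H$ to be adjacent to all but $o(n)$ vertices of every other class; in other words $H$ is $o(n^2)$-close to $T(n,r)$.

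Next I would pin down the degree of $u$. Using the structure of $H$ just obtained, the number of copies of $C_5$ through $u$ is a controlled function of how $N(u)$ distributes across $V_1,\dots,V_r$, and is again maximised by the balanced complete multipartite pattern. Comparing this with the hypothesis $d_{C_5,G}(u)\ge (1-\delta)\cdot\frac{5\,\mathrm{inj}(n,C_5,K_{r+1})}{n}$ forces $d_G(u)\ge (1-\tfrac1r-\varepsilon)n$, where $\varepsilon\to 0$ as $\delta\to 0$. Consequently $u$ misses at most $(\tfrac1r+\varepsilon)n$ vertices in total, so it can be almost entirely non-adjacent to at most one class: if two classes each retained fewer than $n/(2r^2)$ neighbours of $u$, the number of non-neighbours of $u$ would be at least $(2/r-1/r^2-o(1))n>(\tfrac1r+\varepsilon)n$, a contradiction for small $\varepsilon$.

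Finally I run the extension step. Suppose for contradiction that $G\notin\mathfrak{K}_r$; then $u$ cannot be placed into any class, i.e. $u$ has a neighbour in each $V_i$. By the previous paragraph at most one class, say $V_r$, is ``poor'', and since the miss-budget is almost exhausted by $V_r$, in each of $V_1,\dots,V_{r-1}$ the vertex $u$ is adjacent to all but at most $\varepsilon n+o(n)$ vertices. Choosing any $x_r\in N(u)\cap V_r$ and then greedily selecting $x_i\in N(u)\cap N(x_r)\cap\bigcap_{j<i}N(x_j)\cap V_i$ for $i=1,\dots,r-1$ succeeds, because at each step this set has size at least $(1/r-\varepsilon-o(1))n>0$: the vertices $u,x_r$ and the previously chosen $x_j$ each miss only $\varepsilon n+o(n)$ vertices of $V_i$ by the near-completeness from the second paragraph. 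This produces a rainbow copy of $K_r$ inside $N(u)$, which together with $u$ is a $K_{r+1}$, contradicting $K_{r+1}$-freeness. Hence some class receives no neighbour of $u$, so $u$ can be added to it and $G\in\mathfrak{K}_r$, as required. I expect the main obstacle to be the structural step of the second paragraph, namely converting ``near-maximal number of copies of $C_5$'' into the quantitative near-balance and near-completeness of the classes $V_1,\dots,V_r$, since this is precisely what allows the degree bound on $u$ and the final greedy clique-building argument to go through.
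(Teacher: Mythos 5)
Your skeleton (near-balanced classes, near-completeness across classes, at most one ``poor'' class for $u$, then a greedy construction of $K_{r+1}$ to show the poor class contains no neighbour of $u$) matches the paper's proof, but two of the derivations you give for the key inputs are genuinely wrong, not merely sketchy. The first gap is in your second paragraph: you derive the per-vertex structure of $H=G-u$ (``every vertex of $H$ is adjacent to all but $o(n)$ vertices of every other class'') from the fact that $H$ contains $(1-\delta-o(1))\,\mathrm{inj}(n,C_5,K_{r+1})$ copies of $C_5$. This inference is false. A single vertex lies in only $O(n^4)$ copies of $C_5$, so deleting \emph{all} edges at one vertex of $T(n,r)$ changes the global count by $o(n^5)$; more generally, a count within $\delta$ of the maximum is compatible with $\Omega(\delta n)$ isolated vertices. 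Global near-maximality only yields edge-stability (Theorem~\ref{THM:stability-C5-Kr}), i.e.\ closeness to $T(n,r)$ up to $o(n^2)$ edge edits; it can never yield per-vertex near-completeness. The repair is to apply the minimum-degree hypothesis $\delta_{C_5}(G)\ge(1-\delta)\cdot\frac{5\,\mathrm{inj}(n,C_5,K_{r+1})}{n}$ to each vertex separately --- this is exactly the paper's Claim~\ref{CLAIM:proof-C5-degree}. You have this hypothesis available but your written argument does not use it where it is needed.

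The second gap is more serious because it is not just a misattributed mechanism but a false counting claim. In your third paragraph you assert that the number of copies of $C_5$ through $u$, as a function of how $N(u)$ distributes over $V_1,\dots,V_r$, is ``again maximised by the balanced complete multipartite pattern'', and you deduce $d_G(u)\ge(1-\tfrac1r-\varepsilon)n$. Take $r=3$ and $H=T(n,3)$: a vertex adjacent to exactly $2n/9$ vertices of each of the three classes lies in $\tfrac{16}{243}n^4+o(n^4)$ copies of $C_5$, whereas a vertex of $T(n,3)$ itself (adjacent to two full classes, degree $\tfrac{2}{3}n$) lies in only $\tfrac{15}{243}n^4+o(n^4)$ copies; spreading over all classes beats the complete multipartite pattern at equal degree. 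Consequently a vertex of degree $\sqrt{5/12}\,n\approx 0.645\,n<\tfrac23 n$, spread evenly, already meets your $C_5$-degree threshold, so ``$d_{C_5,G}(u)$ near-maximal $\Rightarrow d_G(u)\ge(1-\tfrac1r-\varepsilon)n$'' does not follow from counting alone. Such spread configurations are excluded only because they would create copies of $K_{r+1}$, and your paragraph never invokes $K_{r+1}$-freeness. This is precisely why the paper splits the statement into two claims with different mechanisms: \emph{at least} one class is poor, proved by a random-selection argument using $K_{r+1}$-freeness together with Claim~\ref{CLAIM:proof-C5-degree}, and \emph{at most} one class is poor, proved by the count comparison in Claim~\ref{CLAIM:proof-C5-v-ast-neighbor-a} in which both compared configurations have two deficient classes, so the maximiser issue above never arises. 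Your final greedy step is sound and mirrors the paper's Claim~\ref{CLAIM:proof-C5-v-ast-neighbor-b} (up to a harmless arithmetic slip: the miss-budget in each rich class is $(\tfrac{1}{2r^2}+\varepsilon)n$, not $\varepsilon n+o(n)$), but it needs both of the repaired inputs above to run.
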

\begin{proof}[Proof of Proposition~\ref{PROP:vtx-ext-C5-Kr}]
    Fix $r\ge 3$. 
    Let $0 < \delta \ll \delta_1 \ll \delta_2 \ll \delta_3$ be sufficiently small, and let $n$ be a sufficiently large integer. 
    Let $G$ be an $n$-vertex $K_{r+1}$-free graph satisfying~\eqref{equ:THM:C5-Kr}.
    Suppose $v_{\ast} \in V(G)$ is a vertex such that $G-v_{\ast}$ is $K_{r}$-colorable. 
    Let $V\coloneqq V(G)\setminus \{v_{\ast}\}$. 
    Fix a homomorphism $\varphi \in \mathrm{Hom}(G,K_{r})$ and let $V_i \coloneqq \varphi^{-1}(i)$ for $i \in [r]$. 
    Let $x_i \coloneqq |V_i|/n$ for $i \in [r]$. 
    
    The following claim can be derived through some straightforward but tedious calculations (see~{\cite[Claim~3.10]{LM21}}). 
    \begin{claim}\label{CLAIM:proof-C5-xi}
        We have $\left|x_i - 1/r\right| \le \delta_1$ for every $i \in [r]$.
    \end{claim}
    The following claim follows from the assumption that $\delta_{C_5}(G) \ge (1-\delta)\cdot \frac{5\cdot \mathrm{inj}(n,C_5,K_{r+1})}{n}$ and some straightforward calculations.
    \begin{claim}\label{CLAIM:proof-C5-degree}
        For every $i \in [r]$ and $v \in V_i$, we have 
        \begin{align*}
            |N_{G}(v) \cap V_j| \ge x_j n - \delta_2 n
            \quad\text{for every } j \in [r]\setminus\{i\}. 
        \end{align*}
    \end{claim}
    Let $N_{i} \coloneqq N_{G}(v_{\ast}) \cap V_{i}$ for every $i \in [r]$. 
    The key claim in the proof is as follows. 
    \begin{claim}\label{CLAIM:proof-C5-v-ast-neighbor-a}
        There exists a unique $i_{\ast} \in [r]$ such that $|N_i| \le \delta_3 n$. 
    \end{claim}
    \begin{proof}[Proof of Claim~\ref{CLAIM:proof-C5-v-ast-neighbor-a}]
        First, suppose to the contrary that $\min_{i\in [r]}\left\{|N_i|\right\} > \delta_3 n$. 
        Then, for every $i \in [r]$, choose uniformly at random a vertex $u_i$ from $N_i$. 
        Let $\mathbf{A}$ denote the event that $\{v_{\ast}, u_1, \ldots, u_{r}\}$ induces a copy of $K_{r+1}$ in $G$. Note that $\mathbf{A}$ happens iff $u_i u_j \in G$ for all $1\le i < j \le r$. 
        So it follows from Claim~\ref{CLAIM:proof-C5-degree} and the Union Bound that 
        \begin{align*}
            \mathbb{P}[\mathbf{A}]
            \ge 1 - \sum_{1\le i < j \le r}\mathbb{P}[u_i u_j \not\in G]
            \ge 1 - \binom{r}{2} \cdot \frac{\delta_2}{\delta_3} 
            > 0,  
        \end{align*}
        which means that there exists an $r$-tuple $(u_1, \ldots, u_{r}) \in N_1 \times \cdots \times N_r$ such that $\{v_{\ast}, u_1, \ldots, u_{r}\}$ induces a copy of $K_{r+1}$ in $G$, a contradiction. 

        Now, suppose to the contrary that $|N_{j_1}| \le \delta_3 n$ and $|N_{j_2}| \le \delta_3 n$ for two distinct $j_1, j_2 \in [r]$. 
        By symmetry, we may assume that $\{j_1, j_2\} = \{1,2\}$. 
        Let $U_1 \coloneqq V_1 \cup \{v_{\ast}\}$ and $U_i \coloneqq V_i$ for $i \in [2,r]$. 
        Let $H$ denote the complete $r$-partite graph with parts $U_1, \ldots, U_r$. 
        Note that the difference between $d_{C_5, G}(v_{\ast})$ and $d_{C_5, H}(v_{\ast})$ satisfies 
        \begin{align*}
            d_{C_5, H}(v_{\ast}) - d_{C_5, G}(v_{\ast})
            & \ge 10 \left(\left(|U_2| - |N_2|\right) \cdot \sum_{S\in \binom{[3,r]}{3}} \prod_{i \in S}|U_{i}| - |N_1| \cdot n^3 \right) \\
            & \ge 10 \left(\left(\frac{n}{r}-\delta_2 n - \delta_3 n\right) \cdot \binom{r-2}{3}\left(\frac{n}{r}-\delta_2 n\right)^3 - \delta_3n \cdot n^3 \right) \\
            & \ge 10 \left(\binom{r-2}{3}\left(\frac{1}{2r}\right)^4 - \delta_3 \right)n^4. 
        \end{align*}
        On the other hand, 
        the difference between $d_{C_5, H}(v_{\ast})$ and $\frac{5}{n} \cdot \mathrm{inj}(n,C_5, K_{r+1}) = d_{C_5}(T(n,r))$ (by the result of Lidick\'{y}--Murphy~\cite{LM21}) satisfies 
        \begin{align*}
            \left|d_{C_5, H}(v_{\ast}) - \frac{5}{n} \cdot \mathrm{inj}(n,C_5, K_{r+1})\right| 
            & = \left|d_{C_5, H}(v_{\ast}) - d_{C_5}(T(n,r))\right| \\
            & \le 10 \cdot \sum_{i \in [2,r]}\left(|U_2| - \frac{n}{r}\right) \cdot n^3 
             \le 10 \cdot 5 \cdot \delta_2 n \cdot n^3 
            = 50 \delta_2 n^4. 
        \end{align*}
        Combining these two inequalities, we obtain 
        \begin{align*}
            \left|d_{C_5, G}(v_{\ast}) - \frac{5}{n} \cdot \mathrm{inj}(n,C_5, K_{r+1})\right|
            & \ge 10 \left(\binom{r-2}{3}\left(\frac{1}{2r}\right)^4 - \delta_3 \right)n^4 - 50 \delta_2 n^4 \\
            & > \delta \cdot \frac{5}{n} \cdot \mathrm{inj}(n,C_5, K_{r+1}), 
        \end{align*}
        a contradiction. 
    \end{proof}
    \begin{claim}\label{CLAIM:proof-C5-v-ast-neighbor-b}
        We have $N_{G}(v_{\ast}) \cap V_{i_{\ast}} = \emptyset$, where $i_{\ast} \in [r]$ is the index guaranteed by Claim~\ref{CLAIM:proof-C5-v-ast-neighbor-a}. 
    \end{claim}
    \begin{proof}[Proof of Claim~\ref{CLAIM:proof-C5-v-ast-neighbor-a}]
        By symmetry, we may assume that $i_{\ast} =1$. 
        Suppose to the contrary that there exists a vertex $v_1 \in N_{G}(v_{\ast}) \cap V_{1}$.
        Let $\hat{N}_{i} \coloneqq N_{G}(v_{\ast}) \cap N_{G}(v_{1}) \cap V_{i}$ for every $i \in [2,r]$. 
        It follows from Claims~\ref{CLAIM:proof-C5-degree} and~\ref{CLAIM:proof-C5-v-ast-neighbor-a} that $|\hat{N}_i| \ge \delta_3 n - \delta_2 n \ge \delta_3 n/2$. 
        Then similar to the proof of Claim~\ref{CLAIM:proof-C5-v-ast-neighbor-a}, there exists an $(r-1)$-tuple $(u_2, \ldots, u_{r}) \in \hat{N}_2 \times \cdots \times \hat{N}_r$ such that $\{v_{\ast}, v_1, u_2, \ldots, u_{r}\}$ induces a copy of $K_{r+1}$ in $G$, a contradiction. 
    \end{proof}
Claim~\ref{CLAIM:proof-C5-v-ast-neighbor-b} completes the proof of Proposition~\ref{PROP:vtx-ext-C5-Kr}. 
\end{proof}

\subsection{Proof of Theorem~\ref{THM:C5-Kr}~\ref{THM:C5-Kr-2}}\label{SEC:proof-C2r+1-C2r-1}
We prove Theorem~\ref{THM:C5-Kr}~\ref{THM:C5-Kr-2} in this subsection. 
Note that by Theorems~\ref{THM:general-generalized-Turan-a} and~\ref{THM:stability-C2r+1-C2r-1}, it suffices to establish the following result. 
\begin{proposition}\label{PROP:vtx-ext-C2r+1-C2r-1}
    For every $r\ge 3$, the graph $C_{2r-1}$ is $C_{2r+1}$-vertex-extendable with respect to the family $\mathfrak{C}_{2r+1}$.     
\end{proposition}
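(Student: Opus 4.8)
The plan is to mirror the proof of Proposition~\ref{PROP:vtx-ext-C5-Kr}, replacing the complete-multipartite structure by the cyclic blowup structure of $C_{2r+1}$. Fix $r \ge 3$, constants $0 < \delta \ll \delta_1 \ll \delta_2 \ll \delta_3$, and a large $n$. Let $G$ be an $n$-vertex $C_{2r-1}$-free graph satisfying~\eqref{equ:THM:C2r+1-C2r-1}, and suppose $v_{\ast} \in V(G)$ is such that $G - v_{\ast}$ is $C_{2r+1}$-colorable. Fix $\varphi \in \mathrm{Hom}(G - v_{\ast}, C_{2r+1})$ and set $V_i \coloneqq \varphi^{-1}(i)$ for $i \in \mathbb{Z}_{2r+1}$, with indices read modulo $2r+1$. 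The structural fact driving everything is that every homomorphism $C_{2r+1} \to C_{2r+1}$ wraps around the cycle exactly once: a closed walk of odd length $2r+1$ in $C_{2r+1}$ has net displacement $\equiv 0 \pmod{2r+1}$ of the same parity as its length, forcing displacement $\pm(2r+1)$, i.e.\ a single full turn. Consequently every copy of $C_{2r+1}$ in $G - v_{\ast}$ is a transversal meeting each $V_i$ exactly once in cyclic order, so $G - v_{\ast}$ behaves like a blowup of $C_{2r+1}$ and in particular has odd girth at least $2r+1$; thus any copy of $C_{2r-1}$ in $G$ must pass through $v_{\ast}$.

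As in the $C_5$ case I would first record two routine claims. First, a balance claim: \eqref{equ:THM:C2r+1-C2r-1} together with the extremality of the balanced blowup forces $\bigl||V_i| - n/(2r+1)\bigr| \le \delta_1 n$ for all $i$ (the analogue of Claim~\ref{CLAIM:proof-C5-xi}). Second, an adjacent-degree claim: since each copy of $C_{2r+1}$ through a vertex $v \in V_i$ uses one neighbor of $v$ in $V_{i-1}$ and one in $V_{i+1}$, the near-extremal $C_{2r+1}$-degree of $v$ forces $|N_G(v) \cap V_{i-1}|, |N_G(v) \cap V_{i+1}| \ge n/(2r+1) - \delta_2 n$ (the analogue of Claim~\ref{CLAIM:proof-C5-degree}). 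Both follow from the same kind of straightforward counting used in~\cite{LM21}.

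The heart of the argument is the placement of $v_{\ast}$. Let $I \coloneqq \{i : N_G(v_{\ast}) \cap V_i \ne \emptyset\}$. I claim that any two indices $a, b \in I$ satisfy $b - a \in \{0, 2, -2\} \pmod{2r+1}$. Suppose not, and pick neighbors $u_1 \in N_G(v_{\ast}) \cap V_a$ and $u_2 \in N_G(v_{\ast}) \cap V_b$. A path of length $2r-3$ from $u_1$ to $u_2$ inside $G - v_{\ast}$ closes up with the edges $v_{\ast} u_1$ and $v_{\ast} u_2$ into a $C_{2r-1}$, so it suffices to build such a path. Projecting under $\varphi$, a walk of length $2r-3$ from $a$ to $b$ exists in $C_{2r+1}$ iff some odd integer $d^{\ast}$ with $|d^{\ast}| \le 2r-3$ satisfies $d^{\ast} \equiv b - a \pmod{2r+1}$; writing $d^{\ast} = p - q$ with $p + q = 2r - 3$ gives a nonnegative step pattern, which I realize vertex-by-vertex using the adjacent-degree claim, greedily avoiding the $O(r)$ previously chosen vertices (possible since each relevant neighborhood has size $\Omega(n)$). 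A direct check of residues modulo $2r+1$ shows that such a $d^{\ast}$ exists precisely when $b - a \notin \{0, 2, -2\}$, so the assumption yields a $C_{2r-1}$, contradicting $C_{2r-1}$-freeness. Hence every pairwise difference within $I$ lies in $\{0, \pm 2\}$, which forces $I \subseteq \{c, c+2\}$ for some $c$. Setting $\varphi(v_{\ast}) \coloneqq c+1$ then extends $\varphi$ to a homomorphism $G \to C_{2r+1}$, since all neighbors of $v_{\ast}$ lie in $V_c \cup V_{c+2} = V_{(c+1)-1} \cup V_{(c+1)+1}$. Therefore $G$ is $C_{2r+1}$-colorable.

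The main obstacle is the key claim: one must pin down exactly which index-differences $b - a$ admit a length-$(2r-3)$ connecting path (the residue and parity bookkeeping modulo $2r+1$), and then certify that the corresponding walk pattern can actually be realized by distinct vertices, which is where the balance and adjacent-degree claims together with a greedy embedding are needed. The remaining steps are the same routine estimates already appearing in the $C_5$ argument.
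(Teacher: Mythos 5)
Your proposal is correct, and at the top level it matches the paper's proof: the same two routine claims (balance of the parts, as in Claim~\ref{CLAIM:proof-C2r+1-xi}~\ref{CLAIM:proof-C2r+1-xi-1}, and near-complete adjacency of each vertex to the two neighbouring parts, as in Claim~\ref{CLAIM:proof-C2r+1-xi}~\ref{CLAIM:proof-C2r+1-xi-2}), then a structural claim confining $N_G(v_{\ast})$ to two parts at cyclic distance two, then the trivial extension of $\varphi$ by $\varphi(v_{\ast}) \coloneqq c+1$. Where you genuinely depart from the paper is in the key claim. The paper distinguishes $I = \{i \colon |N_i| \ge \delta_3 n\}$ from $J = \{i \colon N_i \neq \emptyset\}$ (your $I$ is the paper's $J$), and must first prove $|I| \ge 2$ by a counting argument on $d_{C_{2r+1},G}(v_{\ast})$ --- the only step that uses the degree hypothesis at $v_{\ast}$ itself --- because its construction for Claim~\ref{CLAIM:proof-C2r+1-v-ast-neighbor-a}~\ref{CLAIM:proof-C2r+1-v-ast-neighbor-a-2} samples $\Theta(r)$ random vertices from one of the sets $N_i$, so that set must have size $\Omega(n)$ for the union bound $\delta_2/\delta_3$ to close. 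Your connecting path instead uses only one fixed vertex from each of $N_a$ and $N_b$, routing all interior vertices through the parts $V_i$, which are always of size $\Omega(n)$ by the balance claim; every required edge then has at least one endpoint ranging over a large part, so the adjacent-degree claim alone drives the embedding and no lower bound on $|N_a|$, $|N_b|$ is ever needed. This makes the $I$/$J$ distinction and the paper's part~\ref{CLAIM:proof-C2r+1-v-ast-neighbor-a-1} unnecessary, gives the distance constraint for \emph{all} pairs of nonempty $N_i$'s in one stroke, and even shows that the degree condition at $v_{\ast}$ itself is not required for extendability. Your conclusion ($J \subseteq \{c, c+2\}$) is weaker than the paper's ($I = J = \{i_{\ast}, i_{\ast}+2\}$) but is exactly what is needed. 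Both proofs rest on the same arithmetic fact, which you state and verify correctly: a walk of length $2r-3$ from $a$ to $b$ exists in $C_{2r+1}$ if and only if $b - a \not\equiv 0, \pm 2 \pmod{2r+1}$.

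One detail to make explicit in a write-up: a one-sided greedy walk started at $u_1$ is not forced to terminate at $u_2$. You should choose the penultimate vertex of the path inside $N_G(x) \cap N_G(u_2) \cap V_j$, where $x$ is the previously embedded vertex and $V_j$ is the penultimate class of the walk; this set has size at least $|V_j| - 2\delta_2 n - O(r) > 0$ by applying Claim~\ref{CLAIM:proof-C2r+1-xi}~\ref{CLAIM:proof-C2r+1-xi-2} to both $x$ and $u_2$, so the fix is immediate, but it is the one place where the adjacent-degree claim must be invoked at $u_2$ and not only at interior vertices. (The paper's union-bound formulation absorbs the analogous issue automatically, since the missing-edge event at its fixed endpoint $w_{j_1}$ is one of the events being union-bounded.) With that point spelled out, your argument is complete at the same level of rigor as the paper's.
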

\begin{proof}[Proof of Proposition~\ref{PROP:vtx-ext-C2r+1-C2r-1}]
    Fix $r \ge 3$. 
    Let $0 < \delta \ll \delta_1 \ll \delta_2 \ll \delta_3$ be sufficiently small, and let $n$ be a sufficiently large integer. 
    Let $G$ be an $n$-vertex $C_{2r-1}$-free graph satisfying~\eqref{equ:THM:C2r+1-C2r-1}.
    Suppose $v_{\ast} \in V(G)$ is a vertex such that $G-v_{\ast}$ is $C_{2r+1}$-colorable. 
    Let $V\coloneqq V(G)\setminus \{v_{\ast}\}$. 
    Fix a homomorphism $\varphi \in \mathrm{Hom}(G,C_{2r+1})$ and let $V_i \coloneqq \varphi^{-1}(i)$ for $i \in [2r+1]$. 
    Let $x_i \coloneqq |V_i|/n$ for $i \in [2r+1]$. 
    Simple calculations using Maclaurin's inequality and the assumption that $\delta_{C_{2r+1}}(G) \ge (1-\delta) \cdot \frac{(2r+1)\cdot \mathrm{inj}(n,C_{2r+1},C_{2r-1})}{n}$ yield the following claim. 
    \begin{claim}\label{CLAIM:proof-C2r+1-xi}
        The following statements hold. 
        \begin{enumerate}[label=(\roman*)]
            \item\label{CLAIM:proof-C2r+1-xi-1} We have $\left|x_i - \frac{1}{2r+1}\right| \le \delta_1$ for every $i \in [2r+1]$. 
            \item\label{CLAIM:proof-C2r+1-xi-2} For every $i \in [2r+1]$ and $v \in V_i$, we have 
            \begin{align*}
                \max\left\{|V_{i-1} \setminus N_{G}(v)|,\  |V_{i+1} \setminus N_{G}(v)|\right\} 
                \le \delta_2 n. 
            \end{align*}
            Here, the indices are taken modulo $2r+1$.    
        \end{enumerate}
    \end{claim}
    Let $N_{i} \coloneqq N_{G}(v_{\ast}) \cap V_{i}$ for every $i \in [2r+1]$. 
    Let $I \coloneqq \left\{i \in [2r+1] \colon |N_{i}| \ge \delta_3 n\right\}$ and $J \coloneqq \left\{i \in [2r+1] \colon N_{i} \neq \emptyset\right\}$, noting that $I \subseteq J$. 
    The key claim in the proof is as follows. 
    \begin{claim}\label{CLAIM:proof-C2r+1-v-ast-neighbor-a}
        The following statements hold. 
        \begin{enumerate}[label=(\roman*)]
            \item\label{CLAIM:proof-C2r+1-v-ast-neighbor-a-1} We have $|I| \ge 2$. 
            \item\label{CLAIM:proof-C2r+1-v-ast-neighbor-a-2} For every $j \in J$ and for every $i \in I$, we have $|i - j| = 2$. 
        \end{enumerate}
        Consequently, $I = J = \{i_{\ast}, i_{\ast}+2\}$ for some $i_{\ast} \in [2r+1]$. 
    \end{claim}
    \begin{proof}[Proof of Claim~\ref{CLAIM:proof-C2r+1-v-ast-neighbor-a}]
        First, suppose to the contrary that $|I| \le 1$. 
        By symmetry, we may assume that $I = \{1\}$. 
        Since at most one of the two neighbors of $v_{\ast}$ in any copy of $C_{2r+1}$ containing $v_{\ast}$ can lie in $N_1$, we obtain  
        \begin{align*}
            d_{C_{2r+1},G}(v_{\ast})
            & \le 2(2r+1) \cdot \left(\sum_{i\in [2,2r+1]}|N_1||N_i|+ \sum_{\{i,j\}\subseteq [2,2r+1]}|N_i||N_j|\right) \cdot n^{2r-1} \\
            & \le 2(2r+1)\cdot \left(2r\delta_3 n^2 + \binom{2r}{2}\delta_3^2 n^2\right) \cdot n^{2r-1} 
            < 8r^2(2r+1)\delta_3 n^{2r+1}
            < \delta_{C_{2r+1}}(G), 
        \end{align*}
        a contradiction. 
        Therefore, we have $|I| \ge 2$. 

        Next, we prove~\ref{CLAIM:proof-C2r+1-v-ast-neighbor-a-2}. 
        Suppose to the contrary that there exists $(j_1, j_2) \in J \times I$ such that $|j_1 - j_2| \neq 2 \pmod{2r+1}$. 
        By symmetry, we may assume that $j_2 = 1$. 
        Notice that either $j_1 - 1$ or $1- j_1$ is odd. 
        So, by symmetry, we may assume that $j_1 -1$ is odd. This means that $j_1$ (which is at most $2r+1-3 = 2(r-1)$) is even, and consequently, $2r-j_1$ is even. 
        
        Let $\ell \coloneqq \frac{2(r-1)-j_1+2}{2} \ge 1$. We choose $2(r-1)$ vertices according to the following rules. 
        \begin{itemize}
            \item Choose uniformly at random $\ell$ vertices $u_1, \ldots, u_{\ell}$ from $N_1$. 
            \item Choose uniformly at random $\ell$ vertices $v_1, \ldots, v_{\ell}$ from $V_2$.
            \item For each $i \in [3,j_1-1]$, choose uniformly at random a vertex $w_i$ from $V_i$.
            \item Choose an arbitrary vertex $w_{j_1}$ from $N_{j_1}$.
        \end{itemize}
        Let $\mathbf{A}$ denote the event that $\{v_{\ast}, u_1, \ldots, u_{\ell}, v_1, \ldots, v_{\ell}, w_3, \ldots, w_{j_1}\}$ spans a copy of $C_{2r-1}$ in $G$.
        Note that $\mathbf{A}$ occurs if  
        \begin{itemize}
            \item $u_i v_i \in G$  for all $i \in [\ell]$, 
            \item $u_{i+1}v_i \in G$ for all $i \in [\ell-1]$, 
            \item $v_{\ell} w_3 \in G$, and 
            \item $w_iw_{i+1} \in G$ for $i \in [3,j_1-1]$.  
        \end{itemize}
        Similar to the proof of Claim~\ref{CLAIM:proof-C5-v-ast-neighbor-a}, it follows from Claim~\ref{CLAIM:proof-C2r+1-xi} and the Union Bound that 
        \begin{align*}
            \mathbb{P}[\mathbf{A}]
            \ge 1 - (2r-3) \cdot \frac{\delta_2}{\delta_3} 
            > 0,  
        \end{align*}
        which means that there exists a selection of vertices $\{v_{\ast}, u_1, \ldots, u_{\ell}, v_1, \ldots, v_{\ell}, w_3, \ldots, w_{j_1}\}$ that spans a copy of $C_{2r-1}$ in $G$, a contradiction.
    \end{proof}
    Claim~\ref{CLAIM:proof-C2r+1-v-ast-neighbor-a} completes the proof of Proposition~\ref{PROP:vtx-ext-C2r+1-C2r-1}. 
\end{proof}
\bibliographystyle{alpha}
\bibliography{CountCycle}
\end{document}